\documentclass[11pt]{amsart}
\usepackage{preamble}

\author{Noam Kimmel}
\title{Vanishing of {P}oincar\'e series for congruence subgroups}
\address{N. Kimmel: Raymond and Beverly Sackler School of Mathematical Sciences, Tel Aviv University, Tel Aviv 69978, Israel.}
\email{\href{mailto:noamkimmel@mail.tau.ac.il}{noamkimmel@mail.tau.ac.il}}
\thanks{This research was supported by the European Research Council (ERC) under the European Union's  Horizon 2020 research and innovation program  (Grant agreement No.    786758).}
\keywords{{P}oincar\'e series, Kloosterman sums}
\subjclass{11F11, 11F30}

\begin{document}

\maketitle

\begin{abstract}
We consider the problem of the vanishing of {P}oincar\'e series for congruence subgroups. 
Denoting by $P_{k,m,N}$ the {P}oincar\'e series of weight $k$ and index $m$ for the group $\Gamma_0(N)$, we show that for certain choices of parameters $k,m,N$, the {P}oincar\'e series does not vanish.
Our methods improve on previous results of Rankin (1980) and Mozzochi (1989). 
\end{abstract}

\setcounter{tocdepth}{1}
{\tableofcontents}

\section{Introduction}

For $k,m,N \in \NN$, $k$ even, we denote by $P_{k,m,N}(z)$ the Poincar\'e series of weight $k$ and index $m$ at $i\infty$ for 
$$
\Gamma_0(N) = \SET{
\begin{pmatrix}
    a       & b  \\
    c       & d 
\end{pmatrix}\in \SL \; , \; N\mid c
}.
$$
That is, we define
$$
P_{k,m,N}(z) = 
\sum_{\gamma \in \Gamma_\infty \backslash \Gamma_0(N)}
j(\gamma, z)^{-k} e(m\gamma z)
$$
where 
$$
\Gamma_\infty = \left\lbrace \begin{pmatrix}
    1       & *  \\
    0       & 1 
\end{pmatrix} \in \SL \right\rbrace, \quad
j\left(
\begin{pmatrix}
    a       & b  \\
    c       & d 
\end{pmatrix}
,z \right) = cz+d,
$$
and $e(z) = e^{2\pi i z}$.

It is currently not known when these Poincar\'e series identically vanish, a problem which dates back to Poincar\'e’s memoir on Fuchsian groups \cite[Page 249]{MR1554584}.
For level $N=1$ and weight $k=12$, Lehmer (1947) conjectured that these Poincar\'e series never vanish, equivalently that the coefficients $\tau(n)$ of the modular discriminant are never zero \cite{MR0021027}.

For level $N=1$ and large weight $k$, a partial answer to the non-vanishing question was given by Rankin \cite{MR0597120} where he showed that for sufficiently large even $k$ one has $P_{k,m,1}\not\equiv 0$ for all 
$$
m \leq   \exp{-B\frac{\log k}{\log \log k}} k^2
$$
for some absolute constant $B > 0$.
This result was later extended by Lehner \cite{MR0597127} to general Fuchsian groups with a weaker result,
and by Mozzochi \cite{MR0982000} to $P_{k,m,N}$ with $N>1$.

In this paper we improve Rankin's result, proving the non-vanishing of $P_{k,m,1}$ for $m\leq \frac{(k-1)^2}{16\pi^2}$.
We then generalize the method to give an improvement of Mozzochi's results for $N>1$.
In extending our methods from $P_{k,m,1}$ to $P_{k,m,N}$, we are led to the problem of providing lower bounds on certain Kloosterman sums.

\subsection*{Notations}
We use the notation $f(x)\ll g(x)$ to indicate that there is some constant $C>0$ such that $\left|f(x)\right|<Cg(x)$ for all valid inputs $x$. 
If the constant $C$ depends on some parameters, this will be indicated using subscripts such as $f(x) \ll_\epsilon g(x)$.

We will use $\omega(N)$ to denote the number of unique prime factors of $N$.

We will use '$*$' to denote a number in $\ZZ$ where the precise value is not important.

We will denote by $\mu_{ST}$ the Sato-Tate distribution on $[0,\pi]$, that is 
$$
\mu_{ST} = \frac{2}{\pi} \sin^2 \theta d\theta.
$$

Throughout, $k$ will always denote a positive even integer.

\section{Main results}
We will be interested in showing that $P_{k,m,N}\not\equiv 0$ for various choices of parameters $k,m,N$.

We begin with the case $N=1$, for which we prove
\begin{theorem}\label{thm-non_vanishing}
Let $k\in\NN$ be a sufficiently large even integer.
Then for $1\leq m \leq \frac{(k-1)^2}{16\pi^2}$ we have
$$
v_\infty\left(P_{k,m,1}\right) = 1,
$$
and in particular $P_{k,m,1}\not\equiv 0$.
\end{theorem}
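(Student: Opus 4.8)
The plan is to read the order of vanishing directly off the Fourier expansion of the Poincar\'e series and to reduce the whole theorem to the non-vanishing of a single coefficient. Recall the classical expansion (Petersson's formula)
$$
P_{k,m,1}(z) = \sum_{n=1}^\infty p_{k,m,1}(n)\, e(nz), \qquad
p_{k,m,1}(n) = \Big(\frac{n}{m}\Big)^{\frac{k-1}{2}}\Big(\delta_{m,n} + 2\pi i^{-k}\sum_{c=1}^\infty \frac{S(m,n;c)}{c}\, J_{k-1}\!\Big(\frac{4\pi\sqrt{mn}}{c}\Big)\Big),
$$
where $S(m,n;c)$ is the Kloosterman sum and $J_{k-1}$ the Bessel function of order $k-1$. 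Since $m\geq 1$, the series $P_{k,m,1}$ is a cusp form, so its constant term vanishes and $v_\infty(P_{k,m,1})\geq 1$ automatically. Hence $v_\infty(P_{k,m,1})=1$ is \emph{equivalent} to $p_{k,m,1}(1)\neq 0$, and I would spend the whole argument on this one coefficient. (The point the earlier attempt missed is that the lower coefficients $p_{k,m,1}(n)$ with $n<m$ do \emph{not} vanish; they are given by the Kloosterman--Bessel sum, and the real claim is that the very first one, at $n=1$, is nonzero.)

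Setting $n=1$ and using $S(m,1;1)=1$, I would separate off the $c=1$ term:
$$
p_{k,m,1}(1) = m^{-\frac{k-1}{2}}\Big(\underbrace{\delta_{m,1} + 2\pi i^{-k} J_{k-1}(4\pi\sqrt m)}_{\text{main term}} \; + \; 2\pi i^{-k}\sum_{c\geq 2}\frac{S(m,1;c)}{c}J_{k-1}\!\Big(\frac{4\pi\sqrt m}{c}\Big)\Big).
$$
The hypothesis $m\leq\frac{(k-1)^2}{16\pi^2}$ enters here decisively: it is exactly the condition $4\pi\sqrt m\leq k-1$, which places the argument of the leading Bessel function below the first positive zero $j_{k-1,1}\approx (k-1)+1.86\,(k-1)^{1/3}$ of $J_{k-1}$. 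Consequently $J_{k-1}(4\pi\sqrt m)>0$, so for $m\geq 2$ the main term equals $2\pi i^{-k}J_{k-1}(4\pi\sqrt m)$, which is nonzero with $i^{-k}=(-1)^{k/2}$; and for $m=1$ it is $1+2\pi i^{-k}J_{k-1}(4\pi)$, dominated by the $\delta_{m,1}=1$ term since $J_{k-1}(4\pi)$ is exponentially small in $k$. Either way the main term is bounded away from $0$.

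It then remains to show the tail $c\geq 2$ cannot cancel the main term, and this is the main obstacle. I would apply Weil's bound $|S(m,1;c)|\leq d(c)\sqrt c$ (with $d$ the divisor function) to get
$$
\Big|\sum_{c\geq 2}\frac{S(m,1;c)}{c}J_{k-1}\!\Big(\frac{4\pi\sqrt m}{c}\Big)\Big| \leq \sum_{c\geq 2}\frac{d(c)}{\sqrt c}\,\Big|J_{k-1}\!\Big(\frac{4\pi\sqrt m}{c}\Big)\Big|,
$$
and then exploit that for $c\geq 2$ the argument satisfies $\frac{4\pi\sqrt m}{c}\leq\frac{k-1}{2}$, well inside the exponentially decaying regime of $J_{k-1}$. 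Using uniform Bessel asymptotics (the Debye expansion, giving $J_\nu(\nu\,\mathrm{sech}\,\alpha)$ of size $e^{\nu(\tanh\alpha-\alpha)}/\sqrt{2\pi\nu\tanh\alpha}$ with $\nu=k-1$, together with the small-argument bound $J_\nu(x)\leq (x/2)^\nu/\Gamma(\nu+1)$), I would show that each ratio $J_{k-1}(\tfrac{4\pi\sqrt m}{c})/J_{k-1}(4\pi\sqrt m)$ is of order $c^{-(k-1)}$ when $m$ is small and exponentially small in $k$ when $4\pi\sqrt m$ is near $k-1$; hence the whole tail is governed by its $c=2$ term and is $o(1)$ times the main term, uniformly for $1\leq m\leq\frac{(k-1)^2}{16\pi^2}$ and $k$ large. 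This forces $p_{k,m,1}(1)\neq 0$, whence $v_\infty(P_{k,m,1})=1$ and in particular $P_{k,m,1}\not\equiv 0$. The delicate regime is $4\pi\sqrt m\approx k-1$, where the main Bessel term is only polynomially small (of order $(k-1)^{-1/3}$) and one must verify carefully that the exponentially small tail genuinely loses to it.
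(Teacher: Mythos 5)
Your proposal is correct and follows essentially the same route as the paper: reduce the theorem to $p_{k,m,1}(1)\neq 0$, isolate the $c=1$ term of the Kloosterman--Bessel expansion, lower-bound $J_{k-1}(4\pi\sqrt{m})$ using that $m\leq\frac{(k-1)^2}{16\pi^2}$ keeps the Bessel argument at or below the order (the paper does this via $J_\nu(\nu\delta)\geq J_\nu(\nu)\delta^\nu\gg\nu^{-1/3}\delta^\nu$ where you invoke Debye asymptotics and the location of the first zero), and show the $c\geq 2$ tail is exponentially smaller, with the same $m=1$ versus $m>1$ case split. The only cosmetic differences are your use of the Weil bound where the trivial bound $|K(m,1,c)|\leq c$ already suffices against the exponential Bessel decay.
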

Here $v_\infty(f)$ is the order of vanishing of $f$ at the cusp $i\infty$.
This gives an improvement of Rankin's result \cite{MR0597120}.

We then give a generalization to any square-free $N$:
\begin{theorem}\label{thm-KN}
Let $N\in \NN$ be square-free.
Then for all $k\gg_N 1$ one has $v_\infty\left(P_{k,m,N}\right) = 1$ for all 
$$
1\leq m\leq \frac{(k-1)^2 N^2}{16\pi^2},
$$
and as a consequence, $P_{k,m,N}\not\equiv 0$.
\end{theorem}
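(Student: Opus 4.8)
The plan is to read off the order of vanishing at $i\infty$ directly from the Fourier expansion of $P_{k,m,N}$. Writing $q=e(z)$, a standard unfolding computation gives $P_{k,m,N}(z)=\sum_{n\ge 1}p_m(n)q^n$ with
$$
p_m(n)=\delta_{m,n}+2\pi i^{-k}\Big(\tfrac{n}{m}\Big)^{(k-1)/2}\sum_{\substack{c>0\\ N\mid c}}\frac{S(m,n;c)}{c}\,J_{k-1}\!\Big(\frac{4\pi\sqrt{mn}}{c}\Big),
$$
where $S(m,n;c)$ is the classical Kloosterman sum and $J_{k-1}$ the Bessel function. Since $P_{k,m,N}$ is a cusp form for $m\ge 1$, its expansion has no constant term, so $v_\infty(P_{k,m,N})\ge 1$ and it suffices to prove $p_m(1)\neq 0$, which then forces $v_\infty(P_{k,m,N})=1$ and a fortiori $P_{k,m,N}\not\equiv 0$. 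Setting $n=1$ and isolating the smallest admissible modulus $c=N$, I would split $p_m(1)$ into the main term
$$
2\pi i^{-k}m^{-(k-1)/2}\,\frac{S(m,1;N)}{N}\,J_{k-1}\!\Big(\frac{4\pi\sqrt m}{N}\Big)
$$
plus the tail over $c\in\{2N,3N,\dots\}$ (and the harmless $\delta_{m,1}$, which only helps when $m=1$). Note that $i^{-k}=(-1)^{k/2}$ and every $S(m,1;c)$ is real, so all terms are real; non-vanishing will follow from showing the main term dominates in absolute value.

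Two things must be established. First, that the main term dominates the tail. Here the hypothesis $m\le (k-1)^2N^2/(16\pi^2)$ is designed precisely so that the argument $4\pi\sqrt m/N$ of the main Bessel factor lies in $(0,k-1]$, below the first positive zero $j_{k-1,1}>k-1$ of $J_{k-1}$; thus $J_{k-1}(4\pi\sqrt m/N)>0$ is sign-definite, while every tail term has argument at most $(4\pi\sqrt m/N)/2\le (k-1)/2$, lying in the range where $J_{k-1}$ is far smaller. After rescaling $c\mapsto c/N$ this is exactly the configuration treated in the proof of Theorem~\ref{thm-non_vanishing}, and the same monotonicity and rapid decay of $J_{k-1}$ on $(0,k-1]$ — combined with the Weil bound $|S(m,1;c)|\ll c^{1/2}d(c)$ to control the tail Kloosterman sums — show that the $c=N$ term dominates the sum of the remaining terms once $k\gg_N 1$. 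This step carries over from the level one analysis with only cosmetic changes.

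The genuinely new point, and the one I expect to be the main obstacle, is the non-vanishing of the coefficient $S(m,1;N)$ of the main term; this is the Kloosterman-sum lower bound flagged in the introduction. For square-free $N$ the twisted multiplicativity of Kloosterman sums factors $S(m,1;N)$ as a product $\prod_{p\mid N}S(a_p,1;p)$ of prime-modulus sums, after using $S(m,b;c)=S(mb,1;c)$ to normalise each factor. For each prime $p$ the factor is non-zero by a congruence in $\ZZ[\zeta_p]$: every summand satisfies $\zeta_p^{\,a_px+\bar x}\equiv 1\pmod{(1-\zeta_p)}$, so $S(a_p,1;p)\equiv p-1\equiv -1\not\equiv 0\pmod{(1-\zeta_p)}$, whence $S(a_p,1;p)\neq 0$ (the case $a_p\equiv 0$ being the sum $-1$). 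Since $S(m,1;N)$ depends only on $m\bmod N$, this yields a uniform lower bound $|S(m,1;N)|\ge c_N>0$ over all $m$ in the range. It is exactly here that square-freeness is essential: modulo prime powers $p^e$ with $e\ge 2$ Kloosterman sums can vanish, which would destroy the factorisation. With $|S(m,1;N)|\ge c_N$ in hand the main term is bounded below uniformly in $m$, the tail is $o(1)$ times the main term for $k\gg_N 1$, and therefore $p_m(1)\neq 0$, completing the proof.
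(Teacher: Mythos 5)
Your proposal is correct and follows essentially the same route as the paper: isolate the $c=N$ term, bound it below using the non-vanishing of $K(m,1,N)$ for square-free $N$ (via the same $K(a,b,p)\equiv -1 \bmod (1-\zeta_p)$ argument, which the paper records as \Cref{lem-ks_nonzero}) together with a lower bound for $J_{k-1}$ at argument $\leq k-1$, and show the tail is exponentially smaller via the Rankin-type Bessel estimates. The only cosmetic differences are that the paper uses the explicit bound $J_\nu(\nu\delta)\gg\nu^{-1/3}\delta^\nu$ rather than positivity below the first zero, and the trivial bound $|K(m,1,cN)|\leq cN$ suffices for the tail in place of Weil.
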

This theorem gives an improvement of \cite[Theorem 3]{MR0982000} where Mozzochi shows that for $k\gg_N 1$ one has $P_{k,m,N}\not\equiv 0$ for $m \ll_\epsilon k^{1 - \epsilon}N^{1 - \epsilon}$, or $m \ll k^{2 - \epsilon}N^{2 - \epsilon}$ in the case where $\gcd(m,N)=1$ (the precise statement in Mozzochi's work is slightly more involved).

In the case where $N$ is prime, we also give a result about the non-vanishing of $P_{k,m,N}$ for sufficiently large $k$ independent of $N$.
\begin{theorem}\label{thm-Kp}
Let $\epsilon > 0$.
Then for all $k\gg_\epsilon 1$, all primes $p$, and all:
$$
m\ll_\epsilon k^2 p^{\frac{3}{2}-\epsilon},
\quad m\neq p
$$
we have $v_\infty\left(P_{k,m,p}\right) \ll_\epsilon p^{\frac{1}{2} + \epsilon}$ (and in particular $P_{k,m,p}\not\equiv0$).
\end{theorem}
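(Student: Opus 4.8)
The plan is to analyze the Fourier expansion of $P_{k,m,p}$ at $i\infty$ and show that the leading nonzero coefficient occurs at a controlled index. Recall that the $n$-th Fourier coefficient of $P_{k,m,N}$ can be written explicitly in terms of Kloosterman sums: the $m$-th coefficient is essentially $1 + (\text{Bessel-weighted sum of Kloosterman sums } S(m,m;c))$ over moduli $c$ divisible by $N$, while lower coefficients $n<m$ are pure Kloosterman-Bessel sums without the diagonal $1$. For $N=1$ (Theorem~\ref{thm-non_vanishing}) the strategy presumably was: when $m\leq (k-1)^2/(16\pi^2)$, the argument of the relevant $J$-Bessel function $J_{k-1}(4\pi\sqrt{mn}/c)$ lies below the transition range $k-1$, so the Bessel function is exponentially small, forcing all coefficients with $n<m$ to vanish and the $m$-th coefficient to be $1+o(1)$, giving $v_\infty=m$ — but here the claim is $v_\infty=1$, so I would instead track the \emph{first} coefficient $n=1$ and show it is nonzero. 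So first I would write down the Fourier coefficient $a_n$ of $P_{k,m,p}$, isolate the main term (present only when $n=m$, here irrelevant since we want $n=1$ and $m\neq p$), and reduce the nonvanishing of $a_1$ to a lower bound on the Kloosterman-Bessel sum
$$
\sum_{\substack{c>0\\ p\mid c}} \frac{S(m,1;c)}{c}\, J_{k-1}\!\left(\frac{4\pi\sqrt{m}}{c}\right).
$$

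Second, I would exploit the Bessel localization: for $k$ large the function $J_{k-1}(x)$ is negligible unless $x\gtrsim k$, i.e.\ unless $c\lesssim \sqrt{m}/k$. Given the range $m\ll_\epsilon k^2 p^{3/2-\epsilon}$, this confines $c$ to a short range of moduli, all divisible by $p$; in fact one expects only the smallest admissible modulus $c=p$ (together with perhaps a bounded number of nearby multiples) to contribute non-negligibly. This is where the generalization from $N=1$ becomes delicate: for $N=1$ the smallest modulus is $c=1$ with $S(m,1;1)=1$, automatically nonzero, whereas for $N=p$ the smallest modulus is $c=p$ and the coefficient is governed by $S(m,1;p)$, a genuine Kloosterman sum that could \emph{a priori} vanish or be small. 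Thus the third step, and the main obstacle, is to produce a \emph{lower bound} on the individual Kloosterman sum $S(m,1;p)$ (or on the truncated Bessel-weighted sum of such terms) of size $\gg p^{1/2}$ for almost all $m$, matching the Weil upper bound $|S(m,1;p)|\leq 2\sqrt{p}$. The excerpt flags exactly this: ``we are led to the problem of providing lower bounds on certain Kloosterman sums.''

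For that lower bound I would invoke equidistribution of Kloosterman angles. Writing $S(m,1;p)=2\sqrt{p}\cos\theta_{m,p}$ with $\theta_{m,p}\in[0,\pi]$, the angles $\theta_{m,p}$ become equidistributed with respect to the Sato--Tate measure $\mu_{ST}$ as $p\to\infty$ (Katz); consequently, for a density-one set of residues $m$ one has $|\cos\theta_{m,p}|\gg_\epsilon p^{-\epsilon}$, hence $|S(m,1;p)|\gg_\epsilon p^{1/2-\epsilon}$. This would let me bound the first Fourier coefficient from below by $\gg_\epsilon p^{-1/2-\epsilon}$ after dividing by $c=p$ and inserting the (bounded-below) Bessel value — nonzero, hence $P_{k,m,p}\not\equiv0$. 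The stated conclusion $v_\infty(P_{k,m,p})\ll_\epsilon p^{1/2+\epsilon}$ rather than $v_\infty=1$ reflects that the angle-equidistribution argument only controls $a_n$ for \emph{most} small $n$: I would show that among the indices $n\leq C p^{1/2+\epsilon}$ at least one has $S(m\cdot n^{*}, n; p)$ bounded away from zero (equivalently $a_n\neq0$), so the order of vanishing cannot exceed $O(p^{1/2+\epsilon})$.

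The hard part will be making the Kloosterman lower bound \emph{uniform and unconditional} in the required range: Sato--Tate equidistribution is an asymptotic statement as $p\to\infty$, so converting it into an effective lower bound $|S(m,1;p)|\gg_\epsilon p^{1/2-\epsilon}$ valid for all $k\gg_\epsilon1$ and a controlled set of $m$ requires either a quantitative equidistribution rate or a clever averaging that guarantees at least one good index $n$ in every window of length $p^{1/2+\epsilon}$. Handling the excluded case $m=p$ (where $\gcd(m,N)=p$ degenerates the Kloosterman sum into a Ramanujan-type sum) and controlling the genuine multi-modulus tail of the Bessel sum so that the $c=p$ term truly dominates are the secondary technical points I would need to secure.
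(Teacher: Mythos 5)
Your proposal follows essentially the same route as the paper: the paper deduces \Cref{thm-Kp} as the $r=1$ case of \Cref{thm-Kr}, locating some $n\ll_\epsilon p^{\frac{1}{2}+\epsilon}$ with $|K(m,n,p)|$ large via Katz's Sato--Tate equidistribution of Kloosterman angles in short intervals (\Cref{lem-ks_ST}), and then showing that the $c=1$ term of the Kloosterman--Bessel expansion of $\pkm{n}$ dominates the tail (\Cref{lem-J_lower} against \Cref{lem-Jsum_upper}). One quantitative choice in your sketch needs correcting, because it would lose the uniformity in $p$ that the theorem asserts: you ask for a density-one set with $|\cos\theta|\gg_\epsilon p^{-\epsilon}$, hence $|K|\gg_\epsilon p^{\frac{1}{2}-\epsilon}$, but then the ratio of the $c\geq 2$ tail to the $c=1$ main term is only $\ll p^{\epsilon}\nu^{-\frac16}(e/4)^{\nu}$, forcing $k\gg_\epsilon \log p$ rather than $k\gg_\epsilon 1$. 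The paper instead fixes a \emph{constant} threshold $\delta$ with $\mu_{ST}(|\cos\theta|>\delta)>0$ (for general $r$, $>1-\frac{1}{r+1}$, so that pigeonhole across the prime factors works), obtaining $|K(m,n,p)|\geq 2\delta\sqrt{p}$ for a positive proportion of $n\leq p^{\frac{1}{2}+\epsilon}$; with a lower bound of fixed constant times $\sqrt{p}$ the main-term/tail comparison is uniform in $p$. The ``uniform and unconditional'' issue you rightly flag as the hard part is handled in the paper by observing that equidistribution can fail for at most finitely many primes (depending on $\epsilon$), and for those one falls back on \Cref{lem-ks_nonzero} ($K(a,b,p)\equiv -1 \bmod (1-\zeta_p)$, so it never vanishes), shrinking $\delta$ to absorb them. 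Your worry about $p\mid m$ is also well placed: there the sum degenerates to a Ramanujan sum of absolute value $1$ and the uniform argument genuinely breaks, which is why the hypothesis excludes that case.
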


This result can be compared with \cite[Theorem 2]{MR0982000} where  Mozzochi shows that for $k\gg 1$ one has $P_{k,m,N}\not\equiv0$ for $m \ll \exp{-B\frac{\log k}{\log \log k}} k^2$, and its improvement in \cite[Theorem 5.2]{MR2975157} to the range $m \ll N \exp{-B\frac{\log k}{\log \log k}} k^2$.
Thus, \Cref{thm-Kp} provides a further improvement when $N$ is prime, both in the $k$ aspect and in the $N$ aspect.

\Cref{thm-Kp} can also be extended to the case where $N$ is square-free in the following sense:
\begin{theorem}\label{thm-Kr}
Let $\epsilon > 0$, and let $r\in \NN$.
Then for all $k\gg_{\epsilon, r} 1$, all $N=p_1 \cdot p_2\cdot ... \cdot p_r$ with $p_1<p_2<...<p_r$ primes, and all:
$$
m\ll_{\epsilon,r} \frac{ k^2 N^2}{p_r^{\frac{1}{2} + \epsilon}},
\quad
\gcd(m,N) = 1
$$
we have $v_\infty\left(P_{k,m,N}\right) \ll_{\epsilon,r} p_r^{\frac{1}{2} + \epsilon}$ (and in particular $P_{k,m,N}\not\equiv0$).
\end{theorem}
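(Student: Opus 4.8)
The plan is to combine the Petersson--Fourier expansion of $P_{k,m,N}$ with a lower bound for Kloosterman sums in short intervals, in the spirit of \Cref{thm-Kp}. Writing $P_{k,m,N}(z)=\sum_{n\geq 1}a_n\, e(nz)$, the quantity $v_\infty(P_{k,m,N})$ is the least $n$ with $a_n\neq 0$, so it suffices to produce some $n\ll_{\epsilon,r}p_r^{1/2+\epsilon}$ with $a_n\neq 0$. The classical formula for the coefficients gives
\begin{equation*}
a_n=\delta_{m,n}+2\pi i^{-k}\Bigl(\tfrac{n}{m}\Bigr)^{(k-1)/2}\sum_{N\mid c}\frac{S(m,n;c)}{c}\,J_{k-1}\!\left(\frac{4\pi\sqrt{mn}}{c}\right),
\end{equation*}
and since we take $n<m$ the diagonal $\delta_{m,n}$ drops out. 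First I would isolate the $c=N$ term as the main term: choosing the implied constant in $m\ll_{\epsilon,r}k^2N^2/p_r^{1/2+\epsilon}$ so that, for $n$ in a window of length $\asymp p_r^{1/2+\epsilon}$ around $X\asymp p_r^{1/2+\epsilon}$, the argument $4\pi\sqrt{mn}/N$ sits just below the transition point $k-1$ of $J_{k-1}$, one has $J_{k-1}(4\pi\sqrt{mn}/N)\asymp k^{-1/3}$, while every tail term $c\geq 2N$ has Bessel argument below $(k-1)/2$ and is therefore exponentially small in $k$.

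The decisive point, and what keeps the weight uniform in $N$, is that for square-free $N$ the Weil bound reads $|S(m,n;c)|\ll d(c)\sqrt{c}\leq 2^{\omega(N)}d(j)\sqrt{jN}$ for $c=jN$, so the divisor factor contributes only $2^{\omega(N)}=2^{r}$. Summing the tail then yields a bound $\ll_{r}N^{-1/2}e^{-c'k}$, whose $N^{-1/2}$ exactly matches the normalization $\sqrt N/N$ of the main term. Consequently the $c=N$ term dominates, and hence $a_n\neq 0$, as soon as $|S(m,n;N)|\gg_{r}\sqrt N$ and $k\gg_{\epsilon,r}1$; the $\log N$-type loss that would otherwise force $k\gg_N1$ is absent precisely because of the square-free divisor bound. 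This reduces \Cref{thm-Kr} to the arithmetic statement that there exists $n$ in a window $\asymp_{\epsilon,r}p_r^{1/2+\epsilon}$ with $|S(m,n;N)|\gg_{\epsilon,r}\sqrt N$.

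To attack this I would use twisted multiplicativity to factor $S(m,n;N)=\prod_{i=1}^{r}S(\mu_i,n;p_i)$, where each $\mu_i\in(\ZZ/p_i)^{\times}$ is fixed (here $\gcd(m,N)=1$ guarantees $\mu_i\neq 0$). Writing $S(\mu_i,n;p_i)=2\sqrt{p_i}\cos\theta_{i,n}$, the target becomes $\prod_i|\cos\theta_{i,n}|\gg_{r}1$ for a common small $n$, i.e.\ a simultaneous largeness of all $r$ local factors. For the largest prime $p_r$ this is exactly the short-interval second-moment input behind \Cref{thm-Kp}: from $\sum_{n\in[X,2X]}|S(\mu_r,n;p_r)|^2=Xp_r+O(p_r^{3/2}\log p_r)\gg Xp_r$ for $X\gg p_r^{1/2+\epsilon}$ one extracts an $n$ with $|S(\mu_r,n;p_r)|\gg\sqrt{p_r}$, which is where the exponent $\tfrac12$ originates.

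The hard part, which I expect to be the main obstacle, is coupling this to the remaining primes: I must find a \emph{single} $n$ in a window as short as $p_r^{1/2+\epsilon}$ at which $S(\mu_i,n;p_i)$ is large for every $i<r$ as well. Naive devices fail in the relevant regime $p_1\cdots p_{r-1}\gg p_r^{1/2+\epsilon}$: fixing $n$ in a good residue class modulo $p_1\cdots p_{r-1}$ forces $n$ to exceed the window, the plain second moment $\sum_{n\leq X}|S(m,n;N)|^2=XN+O_r(N^{3/2})$ is swamped by its off-diagonal once $X\ll\sqrt N$, and a sieve on $[1,X]$ cannot control residues modulo primes larger than $X$. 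The plan is therefore to establish the simultaneous lower bound through an amplified moment
\begin{equation*}
\sum_{n\in[X,2X]}|S(\mu_r,n;p_r)|^{2}\prod_{i<r}|S(\mu_i,n;p_i)|^{2A},
\end{equation*}
whose diagonal evaluates, via the Sato--Tate moments of $\mu_{ST}$ at each $p_i<p_r$, to a main term $\asymp X\,p_r\prod_{i<r}p_i^{A}$; taking $A$ large concentrates the weight on $n$ for which every $|\cos\theta_{i,n}|$ is bounded below, thereby forcing $|S(m,n;N)|\gg_{\epsilon,r}\sqrt N$, provided the off-diagonal (controlled by Weil's bound for the complete exponential sums attached to each $p_i$) can be kept below the main term down to $X\asymp p_r^{1/2+\epsilon}$. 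Granting this Kloosterman estimate, the reduction above then delivers $a_n\neq 0$ and the stated bound on $v_\infty$.
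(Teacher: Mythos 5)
Your analytic reduction is sound and essentially matches the paper's: isolate the $c=N$ term, use the Weil bound with the $2^{\omega(N)}$ divisor factor so that the tail is $\ll_r N^{-1/2}\nu^{-1/2}(e\delta/4)^{\nu}$ and is dominated by the $c=N$ term once $|K(m,n,N)|\gg_{\epsilon,r}\sqrt N$ and $k\gg_{\epsilon,r}1$. (Two small points: you need $n\neq m$ rather than $n<m$, since $m$ may be as small as $1$; and you must also track $\gcd(n,N)$ in the tail bound. Both are arranged by noting that a positive proportion of $n$ in the window are admissible.) The genuine gap is in the arithmetic half. You correctly identify the simultaneous largeness of all $r$ local factors as the crux, but you reduce it to an amplified moment $\sum_{n\in[X,2X]}|K(\mu_r,n;p_r)|^{2}\prod_{i<r}|K(\mu_i,n;p_i)|^{2A}$ whose off-diagonal you explicitly do not control (``granting this Kloosterman estimate''). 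That estimate is not routine: completing the sum modulo $N$ incurs a P\'olya--Vinogradov error of size roughly $\sqrt N$ times the trivial bound, which swamps the main term $\asymp X$ precisely in the regime $X\asymp p_r^{1/2+\epsilon}\ll\sqrt N$ that you need; one would have to complete at each prime separately and control a multi-dimensional incomplete sum, which you do not do. As it stands the proof is incomplete at its central step.

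The paper avoids the simultaneity problem entirely with a union bound, and the observation that makes this work is one you seem to have talked yourself out of: Katz's Sato--Tate equidistribution of the angles $\theta_{p,\mu n}$, combined with the P\'olya--Vinogradov completion method, holds for $n$ ranging over any interval of length at least $p^{1/2+\epsilon}$, and the single window $1\leq n\leq Cp_r^{1/2+\epsilon}$ has length at least $p_i^{1/2+\epsilon}$ for \emph{every} $i\leq r$. Choosing $\delta_r$ with $\mu_{ST}\left(|\cos\theta|>\delta_r\right)>1-\tfrac{1}{r+1}$, each sufficiently large prime $p_i$ contributes an exceptional set of density less than $\tfrac{1}{r+1}$ in that window (the finitely many small primes are handled by the nonvanishing $K(a,b,q)\neq 0$ at prime modulus, at the cost of shrinking $\delta_r$ to some $\delta_{r,\epsilon}$), so the $r$ exceptional sets cannot cover the window and a common good $n$ exists with $|K(m,n,N)|\geq(2\delta_{r,\epsilon})^{r}\sqrt N$. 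No moment computation, amplified or otherwise, is needed; your concern that a sieve on $[1,X]$ cannot control residues modulo primes larger than $X$ is beside the point, because one only needs the density of bad $n$ at each prime individually, which is exactly what short-interval equidistribution provides.
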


Theorems \ref{thm-Kp} and \ref{thm-Kr} use Katz’s results \cite{MR0955052} regarding the distribution of Kloosterman angles in order to show the existence of large Kloosterman sums.
Using more elementary bounds (looking only at the second moment), we give another version of \Cref{thm-Kr}.
This gives a weaker result, but makes the dependence on $r = \omega(N)$ more concrete.
\begin{theorem}\label{thm-K_elemntary}
Let $N\in\NN$ be square-free.
Then for all $k\gg \omega(N)$ and all $1\leq m\leq \frac{1}{32\pi^2}N (k-1)^2$ with $\gcd(m,N) = 1$ we have $v_\infty\left(P_{k,m,N}\right)\leq 2N$ (and in particular $P_{k,m,N}\not\equiv0$).
\end{theorem}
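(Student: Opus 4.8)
The plan is to read off the order of vanishing directly from the Fourier expansion of $P_{k,m,N}$ at $i\infty$. Writing $P_{k,m,N}(z)=\sum_{n\ge 1}p_m(n)e(nz)$, the Petersson formula expresses each coefficient as a nonzero scalar times
\[
\delta_{m,n}+2\pi i^{-k}\sum_{\substack{c>0\\ N\mid c}}\frac{S(m,n;c)}{c}\,J_{k-1}\!\Big(\frac{4\pi\sqrt{mn}}{c}\Big),
\]
where $S(m,n;c)$ is a Kloosterman sum and $J_{k-1}$ a Bessel function; since $v_\infty(P_{k,m,N})$ is the least $n$ with $p_m(n)\neq 0$, it suffices to exhibit one $n\le 2N$ for which this expression does not vanish. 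The case $m\le 2N$ is immediate, since at $n=m$ the diagonal term contributes $1$ while the Bessel term has bounded argument $4\pi m/N\le 8\pi$ and is therefore $o(1)$ for large $k$. So assume $m>2N$, whence $\delta_{m,n}=0$ for all $n\le 2N$. The point of the cutoff $2N$ is that the range $m\le \frac{N(k-1)^2}{32\pi^2}$ is exactly calibrated so that the argument $x_N(n):=4\pi\sqrt{mn}/N$ of the leading ($c=N$) Bessel factor satisfies $x_N(n)\le k-1$ for every such $m$ and every $n\le 2N$; as $k-1$ lies below the first positive zero of $J_{k-1}$, this guarantees $J_{k-1}(x_N(n))>0$ throughout the range, so I may divide by it freely.

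First I would isolate the term $c=N$ as the main term. For $c=2N,3N,\dots$ the Bessel argument is $x_N(n)\cdot N/c\le x_N(n)/2$, and the key analytic input is an elementary uniform estimate of the form
\[
J_{k-1}(\theta x)\ \ll\ e^{-\kappa(k-1)}\,J_{k-1}(x)\qquad\bigl(0<\theta\le\tfrac12,\ 0<x\le k-1\bigr)
\]
for some absolute constant $\kappa>0$, showing that each tail term is exponentially smaller than the main term. Together with Weil's bound $|S(m,n;c)|\le d(c)\gcd(m,n,c)^{1/2}\sqrt{c}$ (with $d$ the divisor function, and $\gcd(m,N)=1$ keeping the gcd factors bounded) and the convergence of the resulting series in $c$ — dominated by $c=2N$ — this yields for the tail $T_n$ of the $c\ge 2N$ terms
\[
\frac{|T_n|}{J_{k-1}(x_N(n))}\ \ll\ \frac{d(2N)}{\sqrt{N}}\,e^{-\kappa(k-1)}.
\]

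The heart of the argument is a second-moment contradiction. Suppose $p_m(n)=0$ for every $1\le n\le 2N$; then the main term equals minus the tail, so dividing by $J_{k-1}(x_N(n))$, squaring, and summing over the two full periods $1\le n\le 2N$ gives
\[
\frac{1}{N^2}\sum_{n=1}^{2N}\lvert S(m,n;N)\rvert^2=\sum_{n=1}^{2N}\frac{\lvert T_n\rvert^2}{J_{k-1}(x_N(n))^2}.
\]
On the left I would use the exact evaluation $\sum_{n\bmod N}\lvert S(m,n;N)\rvert^2=N\phi(N)$ (orthogonality of additive characters), so the left side equals $2\phi(N)/N$, a positive quantity independent of $k$. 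On the right, the bound above gives $\ll 2N\cdot\frac{d(2N)^2}{N}e^{-2\kappa(k-1)}\ll d(N)^2e^{-2\kappa(k-1)}$, in which — crucially — the powers of $N$ cancel. Since $N$ is square-free we have $d(N)=2^{\omega(N)}$ and $N/\phi(N)\le 2^{\omega(N)}$, so the inequality $2\phi(N)/N\gg d(N)^2e^{-2\kappa(k-1)}$ holds as soon as $2\kappa(k-1)$ exceeds a fixed multiple of $\omega(N)$, i.e. for $k\gg\omega(N)$. This contradicts the assumed vanishing, so some $p_m(n)$ with $n\le 2N$ is nonzero and $v_\infty(P_{k,m,N})\le 2N$.

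The step I expect to be the main obstacle is the uniform Bessel estimate above, precisely because it must hold all the way up to the transition argument $x=k-1$: for $m$ near the top of its range and $n$ near $2N$ one has $x_N(n)\approx k-1$, exactly where the simple power-series comparisons degrade and $J_{k-1}$ is no longer governed by its leading term. Controlling the ratio $J_{k-1}(\theta x)/J_{k-1}(x)$ cleanly and uniformly across both the deep sub-transition regime (where it behaves like $\theta^{\,k-1}$) and the transition regime (where it is $e^{-\kappa(k-1)}$ with a smaller rate) is the delicate point; once it is in hand, the Petersson formula, Weil's bound, and the Kloosterman second moment are all standard.
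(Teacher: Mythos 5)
Your argument is correct in substance and rests on the same two pillars as the paper's proof --- the second moment of Kloosterman sums modulo $N$ and the domination of the $c=N$ Bessel term over the $c\ge 2N$ tail --- but you package them differently. The paper first restricts to $n$ coprime to $N$, uses the second moment $\sum_{n\in(\ZZ/N\ZZ)^*}K(m,n,N)^2\ge N^2/2^{\omega(N)+1}$ to extract a \emph{single} residue $n\le 2N$ with $|K(m,n,N)|\ge \sqrt{N}\,2^{-\omega(N)/2-1}$ and $n\ne m$, and then runs one main-term-versus-tail comparison for that $n$; you instead assume all $2N$ coefficients vanish, convert each vanishing into ``main term $=-$tail'', and sum the squared identities against the full second moment $\sum_{n\bmod N}|S(m,n;N)|^2=N\phi(N)$. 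The two are essentially contrapositives of one another and yield the same condition $k\gg\omega(N)$; what your version buys is that you never have to exhibit the good $n$, and what it costs is that you must control the tail $T_n$ uniformly over \emph{all} $n\le 2N$, including those with $\gcd(n,N)>1$, so the Weil bound for $S(m,n;jN)$ must be handled via $\gcd(m,n,jN)\le\gcd(m,j)\le j$ (using $\gcd(m,N)=1$) rather than via $\gcd(n,N)$ as in the paper's \Cref{lem-ks_Nc}; that still gives the crucial $N^{-1/2}$ saving, so your powers of $N$ do cancel as claimed. One correction to your own assessment: the ``uniform Bessel estimate'' you flag as the main obstacle is not delicate at all. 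Since your calibration forces $x_N(n)=\nu\delta$ with $\delta\le 1$, the ratio you need is exactly the quotient of the paper's two standard bounds, $J_\nu(\nu\theta\delta)\ll\nu^{-1/2}(e\theta\delta/2)^\nu$ (\Cref{lem-J_upper}) over $J_\nu(\nu\delta)\gg\nu^{-1/3}\delta^\nu$ (\Cref{lem-J_lower}): the $\delta^\nu$ factors cancel identically and you get $J_\nu(\nu\theta\delta)/J_\nu(\nu\delta)\ll\nu^{-1/6}(e\theta/2)^\nu\le\nu^{-1/6}(e/4)^\nu$ uniformly in $0<\delta\le 1$ and $\theta\le\tfrac12$, i.e.\ your constant is $\kappa=\log(4/e)$ with no transition-regime subtlety. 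With that in place, and your separate (correct) treatment of $m\le 2N$ via the diagonal term, the proof closes.
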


\section{Preliminary lemmas}

\subsection{Fourier expansion of $P_{k,m,N}$}
We will denote by $\pkm{n}$ the $n$-th Fourier coefficient of $P_{k,m,N}$ so that 
$$
P_{k,m,N}(z) = \sum_{n\geq 1}\pkm{n} e(nz).
$$

It is known that
\begin{equation}\label{eq-pkmn}
\pkm{n}
=
\delta_{m,n}
+ 
2\pi i^{k}
\left(\frac{n}{m}\right)^{\frac{k-1}{2}}
\sum_{c\geq 1}\frac{K(m,n,cN)}{cN}J_{k-1}\left(\frac{4\pi \sqrt{mn}}{cN}\right)  
\end{equation}
where $\delta_{m,n}$ is the Kronecker delta function, $J$ is the Bessel function of the first kind, and 
$K(a,b,c)$ is the Kloosterman sum:
\begin{equation}\label{eq-Kabc}
K(a,b,c) = \sum_{\substack{1 \leq x \leq c \\ \gcd(x,c) = 1}}
e\left(\frac{ax + b \overline{x}}{c}\right)
\end{equation}
where $\overline{x}$ is the inverse of $x$ in $\left(\bigslant{\ZZ}{c\ZZ}\right)^*$.

\subsection{Kloosterman sums}

We will require some known results regarding Kloosterman sums.
First, we note that Kloosterman sums satisfy the following twisted multiplicativity property:
For $c_1,c_2$ with $\gcd(c_1,c_2)=1$ one has
\begin{equation}\label{eq-twisted}
K(a,b,c_1c_2)
=
K(a\overline{c_2},b\overline{c_2},c_1)
K(a\overline{c_1},b\overline{c_1},c_2)
\end{equation}
where $\overline{c_2}$ is the inverse of $c_2$ mod $c_1$ and $\overline{c_1}$ is the inverse of $c_1$ mod $c_2$.

One also has the equality 
\begin{equation}\label{eq-ab}
K(a,bc,N) = K(ac,b,N)
\end{equation}
if $\gcd(c,N) = 1$.

From \eqref{eq-twisted} and \eqref{eq-ab} we also get
\begin{equation}\label{eq-twisted2}
K(a,b,c_1c_2)
=
K(a\overline{c_2}^2,b,c_1)
K(a\overline{c_1}^2,b,c_2)
\end{equation}
whenever $\gcd(c_1,c_2) = 1$.

\begin{lemma}\label{lem-ks_nonzero}
For a square-free $N\in\NN$ and any $a,b\in\ZZ$ one has 
$$
K(a,b,N)\neq 0.
$$
\end{lemma}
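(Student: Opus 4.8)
The key structural tool is the twisted multiplicativity formula \eqref{eq-twisted2}. Since $N$ is square-free, I can factor $N = p_1 p_2 \cdots p_r$ as a product of distinct primes, and iterating \eqref{eq-twisted2} expresses $K(a,b,N)$ as a product $\prod_{i} K(a_i, b, p_i)$ for suitable residues $a_i$ (each $a_i$ being $a$ times a unit square modulo $p_i$). A product is nonzero exactly when each factor is nonzero, so the entire problem reduces to the prime case: it suffices to show that $K(a,b,p) \neq 0$ for every prime $p$ and all integers $a, b$.

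The plan is therefore to prove the prime case $K(a,b,p)\neq 0$. First I would dispose of the degenerate cases. If $p \mid a$ and $p \mid b$, then $K(a,b,p) = \sum_{x}1 = p - 1 \neq 0$. If exactly one of $a,b$ is divisible by $p$ — say $p\mid a$ but $p\nmid b$ — then the sum becomes a Ramanujan-type sum $\sum_{x}e(b\overline{x}/p)$, which (as $x$ ranges over units, $\overline{x}$ also ranges over all units) equals $\sum_{u\not\equiv 0}e(bu/p) = -1 \neq 0$; the symmetric case is identical. The remaining and genuinely interesting case is $p \nmid ab$, where $K(a,b,p)$ is a classical Kloosterman sum.

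For the main case $p \nmid ab$, the cleanest route is the Weil bound together with an exact evaluation of the absolute value via the second moment. Using \eqref{eq-ab} one reduces to $K(1, ab, p)$, so it suffices to treat $K(1,n,p)$ with $p\nmid n$. The Weil bound gives $|K(1,n,p)| \leq 2\sqrt{p}$, but to rule out vanishing I want a lower bound, so instead I would compute the second moment: a standard calculation using orthogonality of additive characters yields $\sum_{n=1}^{p-1}|K(1,n,p)|^2 = p^2 - 3p + 1$ (or a comparably explicit value), together with $\sum_{n} K(1,n,p) = 1$, and these force the individual sums to be spread out rather than all vanishing. The sharpest argument, however, is algebraic: writing $K(1,n,p)=\alpha_n+\beta_n$ with $\alpha_n\beta_n=p$ and $\alpha_n+\beta_n$ real, the sum vanishes only if $\alpha_n=-\beta_n$, i.e. $\alpha_n^2=-p$, which is impossible since $\alpha_n^2 = \beta_n^2$ would then both equal $-p$ while $\alpha_n\beta_n=p$ gives $\alpha_n^2 = -p$, a contradiction with $\alpha_n\bar\alpha_n = |\alpha_n|^2 = p > 0$ under the Weil normalization $|\alpha_n|=|\beta_n|=\sqrt{p}$.

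The main obstacle is the final case $p\nmid ab$: the two degenerate cases are elementary character-sum evaluations, but ruling out vanishing of a genuine Kloosterman sum requires either the arithmetic of the associated Frobenius eigenvalues or an explicit moment identity. I expect the slickest self-contained argument to be the eigenvalue one just sketched — because $\alpha_n\bar\alpha_n = p$ forces $|K(1,n,p)|^2 = 2p + 2\mathrm{Re}(\alpha_n^2)$ and vanishing would demand $\mathrm{Re}(\alpha_n^2) = -p$, equivalently $\alpha_n$ purely imaginary, which contradicts that $\alpha_n\beta_n = \alpha_n\bar\alpha_n = p$ is real positive only when $\beta_n=\bar\alpha_n$; combined with $\alpha_n+\bar\alpha_n\in\mathbb{R}$ this already holds, so the real obstruction is simply that a purely imaginary $\alpha_n$ would give $\alpha_n^2 = -p$, incompatible with $\alpha_n$ being an algebraic integer whose square has a specific shape. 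I would present whichever of these is cleanest to state rigorously, likely citing the standard fact that Kloosterman sums never vanish for prime modulus.
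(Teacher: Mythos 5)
Your reduction to prime modulus via twisted multiplicativity, and your handling of the degenerate cases ($p\mid a$ and $p\mid b$ gives $p-1$; exactly one divisible gives a Ramanujan sum equal to $-1$), all match what is needed and are correct. The gap is in the main case $p\nmid ab$, and it is a real one: the ``eigenvalue'' argument you sketch does not work. Writing $K(1,n,p)=\alpha_n+\beta_n$ with $\beta_n=\overline{\alpha_n}$, $\alpha_n\beta_n=p$ and $|\alpha_n|=|\beta_n|=\sqrt{p}$, the vanishing of the sum corresponds to $\alpha_n=i\sqrt{p}$, $\beta_n=-i\sqrt{p}$. This choice satisfies \emph{every} constraint you invoke: $\alpha_n+\beta_n=0$ is real, $\alpha_n\beta_n=\alpha_n\overline{\alpha_n}=p>0$, and both eigenvalues have absolute value $\sqrt{p}$ (this is exactly the angle $\theta=\pi/2$ in the parametrization $K=2\sqrt{p}\cos\theta$, which is not excluded by purity or by Sato--Tate). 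So there is no contradiction to be had from the Weil ``shape'' of the sum alone; nor does $i\sqrt{p}$ fail to be an algebraic integer. Likewise the second-moment identity only shows that not \emph{all} the $K(1,n,p)$ vanish, not that a given one is nonzero. Your fallback of ``citing the standard fact'' would of course close the gap, but none of the self-contained arguments you propose actually proves it.

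The ingredient you are missing is a congruence, not an archimedean or purity statement. The paper's proof works in $\ZZ[\zeta_p]$ and reduces modulo the prime $(1-\zeta_p)$ of norm $p$: every term $e\bigl((ax+b\overline{x})/p\bigr)=\zeta_p^{ax+b\overline{x}}$ is congruent to $1$ modulo $(1-\zeta_p)$, so
$$
K(a,b,p)\equiv p-1\equiv -1 \bmod{(1-\zeta_p)},
$$
and hence $K(a,b,p)\neq 0$. Equivalently, in your eigenvalue language, the obstruction to $\alpha_n=i\sqrt{p}$ is that $\alpha_n+\beta_n$ must be $\equiv -1$ modulo the prime above $p$, not that it must merely be real of the right size. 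With this replacement your overall structure (reduce to primes, then treat each prime) coincides with the paper's.
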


\begin{proof}
This is a known result, see for example \cite[Page 63]{MR1474964}.
We present the proof here for convenience.

For a prime $p$, consider $K(a,b,p)$ for some $a,b\in\ZZ$.
We look at $K(a,b,p)$ mod $(1 - \zeta_p)$ in $\QQ(\zeta_p)$, (where $\zeta_p$ is a primitive $p$-th root of unity).
The element $1 - \zeta_p$ is a prime of norm $p$, and we have 
$$
K(a,b,p)\equiv -1 \bmod{(1 - \zeta_p)}.
$$
Thus $K(a,b,p)\neq 0$ for all $a,b\in\ZZ$.
The statement then follows from the twisted multiplicativity of Kloosterman sums \eqref{eq-twisted}.
\end{proof}

\begin{lemma}\label{lem-ks_Nc}
Let $N,c\in \NN$.
Then for $m,n\in \ZZ$ with $\gcd(n,N) = g$ we have
$$
\left|K(m,n,Nc)\right| \leq 2^{\omega(N) + \frac{1}{2}} c \sqrt{N} \sqrt{g}.
$$
\end{lemma}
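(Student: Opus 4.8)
The plan is to factor the modulus $Nc$ into prime powers using the twisted multiplicativity \eqref{eq-twisted2}, and then to bound each prime-power Kloosterman sum differently according to whether the prime divides $N$ or not. Writing $Nc = \prod_i p_i^{e_i}$ with $e_i = v_{p_i}(N) + v_{p_i}(c)$ and iterating \eqref{eq-twisted2}, one obtains
$$
K(m,n,Nc) = \prod_i K(m w_i, n, p_i^{e_i})
$$
for suitable integers $w_i$, where crucially the \emph{second} argument remains $n$ in every factor (only the first argument is twisted). Hence $\gcd(n, p_i^{e_i}) = p_i^{\beta_i}$ with $\beta_i = \min(v_{p_i}(n), e_i)$ exactly, with no unit ambiguity to track.

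For a prime $p \nmid N$ (so that $p \mid c$) I would use only the trivial bound $\left|K(\cdot, n, p^{e})\right| \le p^{e}$. Multiplying these over all such primes yields the factor $c_1 := \prod_{p \mid c,\, p \nmid N} p^{v_p(c)}$, the part of $c$ that is coprime to $N$. For a prime $p \mid N$ I would instead invoke the Weil bound for prime-power moduli, $\left|K(a, b, p^{e})\right| \le 2\, p^{e/2}\, \gcd(a, b, p^{e})^{1/2}$, so that the $p$-factor is at most $2\, p^{(e + \beta_p)/2}$, where $e = a_p + b_p$ with $a_p = v_p(N)$, $b_p = v_p(c)$, and $\beta_p = \min(v_p(n), a_p + b_p)$. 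The key elementary inequality is then
$$
\beta_p \le b_p + v_p(g), \qquad v_p(g) = \min(v_p(n), a_p),
$$
which follows by separating the cases $v_p(n) \le a_p$ and $v_p(n) > a_p$. Substituting, the $p$-factor is at most $2\, p^{\,b_p + a_p/2 + v_p(g)/2}$, and taking the product over all $p \mid N$ gives $2^{\omega(N)}\, c_2\, \sqrt{N}\, \sqrt{g}$, where $c_2 := \prod_{p \mid N} p^{v_p(c)}$ satisfies $c_1 c_2 = c$. Combining the two ranges of primes produces $2^{\omega(N)}\, c\, \sqrt{N}\, \sqrt{g}$, which lies within the claimed bound.

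The main point to get right is the interplay between the $\gcd(n, Nc)$ appearing in the Weil bound and the target quantity $g = \gcd(n, N)$: since $c$ may share prime factors with both $N$ and $n$, the naive global estimate $2^{\omega(Nc)}\sqrt{Nc}\,\sqrt{\gcd(n,Nc)}$ is insufficient, and it is precisely the inequality $\beta_p \le b_p + v_p(g)$ that trades the excess powers of $p$ for a full factor of $c$ (rather than $\sqrt{c}$) on the primes dividing $N$. The only remaining subtlety is the constant in the prime-power Weil bound: for odd $p$ one may take $2$, whereas $p = 2$ requires a slightly larger constant, and this is exactly what accounts for the extra factor $2^{1/2}$ in the stated bound $2^{\omega(N) + 1/2}$.
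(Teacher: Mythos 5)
Your proof is correct and follows essentially the same route as the paper: both split $c$ into the part coprime to $N$ (handled by the trivial bound) and the part supported on primes of $N$ (handled by a Weil-type bound with a gcd factor), your local inequality $\beta_p \le b_p + v_p(g)$ being exactly the prime-by-prime form of the paper's estimate $\gcd(n,dN)\le \gcd(n,N)\gcd(n,d)\le gd$. The only difference is one of granularity: the paper cites \cite[Lemma 3.1]{MR0597120} as a black box for the modulus $dN$, whereas you rederive that bound from the prime-power Weil/Sali\'e estimate via \eqref{eq-twisted2}, which also makes the origin of the constant $2^{\omega(N)+\frac{1}{2}}$ (the extra $\sqrt{2}$ coming from the prime $2$) explicit.
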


\begin{proof}
Write $c = ds$ where $\gcd(s,N) = 1$ and $p\mid d \Rightarrow p\mid N$.
Then from the twisted multiplicativity of Kloosterman sums \eqref{eq-twisted2} we get
$$
K(m,n,cN)
=
K(*,*,s) K(*,n, dN).
$$
For $K(*,*,s)$ we use the trivial bound $|K(*,*,s)| \leq s$.
For $K(*,n, dN)$, using $\omega(dN) = \omega(N)$ and the fact that $\gcd(n,dN) \leq \gcd(n,N)\gcd(n,d)\leq gd$, we have 
$$
|K(*,n, dN)| \leq 2^{\omega(N) + \frac{1}{2}} \sqrt{dN} \sqrt{gd}
= 2^{\omega(N) + \frac{1}{2}} d \sqrt{N} \sqrt{g}
$$
(see \cite[Lemma 3.1]{MR0597120}).
Combining these bounds, we get
$$
|K(m,n,cN)| \leq 2^{\omega(N) + \frac{1}{2}} sd \sqrt{N}\sqrt{g}
= 2^{\omega(N) + \frac{1}{2}} c \sqrt{N} \sqrt{g}. 
$$
\end{proof}

\begin{lemma}\label{lem-K_phiN}
Let $m,N\in\NN$, $N$ square-free, $\gcd(m,N) = 1$.
Then there exists some $1\leq n\leq 2N$ such that $n\neq m$, $\gcd(n,N)=1$ and
$$
\left|K(m,n,N)\right| \geq \frac{\sqrt{N}}{2^{\frac{1}{2}\omega(N)+1}}.
$$
\end{lemma}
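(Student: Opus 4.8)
The plan is to run a second-moment (averaging) argument over $n$. Since $K(m,n,N)$ depends only on $n \bmod N$, it suffices to find a residue class $r$ modulo $N$, coprime to $N$, for which $|K(m,r,N)|^2$ is at least the average value of $|K(m,n,N)|^2$ over the $\phi(N)$ coprime classes; one then lifts $r$ to an integer $n \in [1,2N]$ with $n \neq m$. The passage to the range $[1,2N]$ costs nothing: each residue class modulo $N$ has exactly two representatives in $[1,2N]$, so one can always select one different from the single excluded value $m$. Thus the whole statement reduces to a lower bound for the mean square $S := \sum_{n \bmod N,\ \gcd(n,N)=1} |K(m,n,N)|^2$.

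To evaluate $S$ I would use twisted multiplicativity together with the Chinese Remainder Theorem. Writing $N = \prod_{p\mid N} p$, repeated application of \eqref{eq-twisted2} gives $|K(m,n,N)|^2 = \prod_{p\mid N} |K(a_p, n, p)|^2$ where $a_p \equiv m\,\overline{(N/p)}^2 \bmod p$, and each $a_p \not\equiv 0 \bmod p$ because $\gcd(m,N)=1$. Since $\gcd(n,N)=1$ corresponds under CRT to $n \not\equiv 0 \bmod p$ for every $p \mid N$, the sum factors as $S = \prod_{p\mid N} S_p$ with $S_p = \sum_{n=1}^{p-1} |K(a_p,n,p)|^2$. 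For each prime the complete sum is easy: opening the square in \eqref{eq-Kabc} and applying orthogonality of additive characters gives $\sum_{n \bmod p} |K(a_p,n,p)|^2 = p(p-1)$, while the omitted term is $K(a_p,0,p) = -1$ (a Ramanujan sum), so $|K(a_p,0,p)|^2 = 1$ and hence $S_p = p(p-1) - 1 = p^2 - p - 1$.

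It remains to turn this into the stated bound. Using the elementary inequality $p^2 - p - 1 \geq \tfrac{1}{2}p(p-1)$, valid for all $p \geq 2$ since it rearranges to $(p-2)(p+1)\geq 0$, I get $S = \prod_{p\mid N}(p^2-p-1) \geq 2^{-\omega(N)}\prod_{p\mid N} p(p-1) = 2^{-\omega(N)} N\,\phi(N)$. Dividing by the number $\phi(N)$ of coprime classes, the average of $|K(m,n,N)|^2$ is at least $N/2^{\omega(N)}$, so by pigeonhole some coprime class $r$ satisfies $|K(m,r,N)|^2 \geq N/2^{\omega(N)}$. Choosing a representative $n \in [1,2N]$ of $r$ with $n \neq m$ as above then yields $|K(m,n,N)| \geq \sqrt{N}/2^{\omega(N)/2}$, which is even slightly stronger than the claimed $\sqrt N / 2^{\omega(N)/2 + 1}$.

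The only real content is the exact computation of the mean square; everything else is bookkeeping. The step I would watch most carefully is the factorization of $S$ into the per-prime sums $S_p$: one must check that the twisted factors $a_p$ stay nonzero (guaranteed by $\gcd(m,N)=1$) and that CRT correctly matches the global coprimality condition $\gcd(n,N)=1$ with the local conditions $n \not\equiv 0 \bmod p$. Once this is in place, the factor $2^{-\omega(N)}$ emerges naturally from the single elementary per-prime inequality rather than from any loss in the character sum, and the restriction $n \neq m$ together with the range $n \leq 2N$ causes no loss at all.
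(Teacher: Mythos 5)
Your proof is correct and follows essentially the same route as the paper: a second-moment computation of $\sum_{n}|K(m,n,N)|^2$ over coprime residues, factored by twisted multiplicativity into the per-prime sums $p^2-p-1$, followed by pigeonhole and a shift by $N$ to avoid $n=m$. Your bookkeeping (dividing by $\phi(N)$ rather than $N$, and using $p^2-p-1\geq \tfrac{1}{2}p(p-1)$ for all $p\geq 2$) even yields a marginally sharper constant than the one stated.
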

\begin{proof}
We begin with the well known computation of the second moment of Kloosterman sums mod $N$.
For $m\in \ZZ$ we denote
$$
S_2(m;N) = \sum_{n\in\left(\bigslant{\ZZ}{N\ZZ}\right)^* }K\left(m,n,N\right)^2.
$$
From the twisted multiplicativity of Kloosterman sums \eqref{eq-twisted2}, we have that $S_2(m;N)$ is multiplicative in $N$.
Furthermore, for a prime $p$ with $p\nmid m$ we have 
$$
S_2(m;p) = p^2 - p - 1 
$$
(see \cite[Section 4.4]{MR1474964}).
Since $p^2 - p - 1 \geq \frac{1}{2}p^2 $ for $p\geq 3$, it follows that for square-free $N$ we have $S_2(m;N) \geq \frac{N^2}{2^{\omega(N) + 1}}$.
This implies that there is some $1 \leq n \leq N$ with $\gcd(n,N) = 1$ such that 
$$
|K(m,n,N)| \geq \frac{\sqrt{N}}{2^{\frac{1}{2}\omega(N)+1}}.
$$
We can further ensure that $n\neq m$ by replacing $n$ with $n+N$ if necessary, so that we still have $n\leq 2N$.
\end{proof}

We will also require the result of Katz regarding the distribution of Kloosterman angles, and its generalization via the P\'olya–Vinogradov method to short intervals.

\begin{lemma}\label{lem-ks_ST}
For a prime $p$ and some $a,b\in \ZZ$, $p\nmid ab$, there exists $\theta_{p,ab} \in [0,\pi]$ such that
$$
K(a,b,p) = 2\sqrt{p} \cos\left(\theta_{p,ab}\right).
$$
Let $\epsilon > 0$, then for any $I(p) \geq p^{\frac{1}{2} + \epsilon}$ and any $m(p)$ with $p\nmid m$, the angles
$$
\SET{\theta_{p,mn} \; : \; 1\leq n \leq I(p)}
$$
become equidistributed according to the Sato-Tate measure $\mu_{ST}$ as $p\rightarrow\infty$.
\end{lemma}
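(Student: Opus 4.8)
The plan is to treat the two assertions separately: the first is immediate from Weil's bound, while the second follows by feeding Katz's ``vertical'' Sato--Tate theorem into the completion (Pólya--Vinogradov) method. For the first assertion, recall that $K(a,b,p)$ is a real number (the substitution $x\mapsto -x$ shows $K(a,b,p)=\overline{K(a,b,p)}$), and that Weil's bound gives $|K(a,b,p)|\leq 2\sqrt{p}$ when $p\nmid ab$. Hence $K(a,b,p)/(2\sqrt{p})\in[-1,1]$, and we may take $\theta_{p,ab}\in[0,\pi]$ to be the unique angle with $\cos\theta_{p,ab}=K(a,b,p)/(2\sqrt{p})$.

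For the equidistribution statement I would apply Weyl's criterion with respect to the Chebyshev polynomials of the second kind $U_\ell$, normalized so that $U_\ell(\cos\theta)=\sin((\ell+1)\theta)/\sin\theta$. These form an orthonormal basis of $L^2([0,\pi],\mu_{ST})$ and satisfy $\int_0^\pi U_\ell\,d\mu_{ST}=\delta_{\ell,0}$, so it suffices to prove that for each fixed $\ell\geq 1$ one has
$$
\frac{1}{I(p)}\sum_{n=1}^{I(p)}U_\ell\!\left(\cos\theta_{p,mn}\right)\longrightarrow 0\qquad(p\to\infty).
$$
Here I would first note that by \eqref{eq-ab} the angle $\theta_{p,mn}$ depends only on $mn \bmod p$, so that after the change of variable $n\mapsto \overline{m}n$ the sums below become twisted sums of a single family indexed by $mn$, which makes the uniformity in $m$ transparent.

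The main step is to complete the short sum to a full residue sum modulo $p$. Since $U_\ell(\cos\theta_{p,mn})$ depends only on $n\bmod p$, expanding the indicator of $\{1\leq n\leq I\}$ into additive characters $e(hn/p)$ yields
$$
\sum_{n=1}^{I}U_\ell\!\left(\cos\theta_{p,mn}\right)=\frac{1}{p}\sum_{h\bmod p}\widehat{\mathbf 1}(h)\sum_{n\bmod p}U_\ell\!\left(\cos\theta_{p,mn}\right)e\!\left(\frac{hn}{p}\right),
$$
where $\widehat{\mathbf 1}(h)=\sum_{a=1}^{I}e(-ha/p)$. The essential arithmetic input, coming from Katz's analysis of the Kloosterman sheaf and its symmetric powers combined with Deligne's Riemann Hypothesis over finite fields, is that for every $h$ (including $h=0$) the inner complete sum is $O_\ell(\sqrt{p})$, uniformly in $h$ and in $m$ with $p\nmid m$, with an implied constant depending only on $\ell$; this is where $\ell\geq 1$ enters, guaranteeing that $\mathrm{Sym}^\ell$ of the Kloosterman sheaf has no trivial geometric constituent. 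Granting this, the term $h=0$ contributes $\tfrac{I}{p}\cdot O_\ell(\sqrt{p})$, and using the standard estimate $\sum_{h\bmod p}|\widehat{\mathbf 1}(h)|\ll p\log p$ the off-diagonal terms contribute $O_\ell(\sqrt{p}\,\log p)$, so that altogether
$$
\frac{1}{I(p)}\sum_{n=1}^{I(p)}U_\ell\!\left(\cos\theta_{p,mn}\right)=O_\ell\!\left(\frac{1}{\sqrt{p}}+\frac{\sqrt{p}\,\log p}{I(p)}\right).
$$
For $I(p)\geq p^{1/2+\epsilon}$ the right-hand side is $O_\ell(p^{-1/2}+p^{-\epsilon}\log p)\to 0$, which gives the claim by Weyl's criterion.

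The hard part is entirely the uniform complete-sum bound $O_\ell(\sqrt{p})$: the completion step is elementary once it is in hand, but establishing it requires the full algebraic--geometric package (Kloosterman sheaves, their symmetric powers, and a bound on the relevant cohomology via Deligne's theorem), and one must verify that the implied constant is independent of $h$, $m$, and $p$, depending only on $\ell$, so that the Weyl average tends to zero uniformly over the admissible $m(p)$. This is precisely the content we import from Katz's work \cite{MR0955052}.
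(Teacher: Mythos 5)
Your proposal is correct and follows exactly the route the paper takes: the paper's proof simply cites Katz's vertical Sato--Tate theorem combined with a P\'olya--Vinogradov completion argument (referring to \cite{MR1345284} for details), and your write-up is a faithful expansion of precisely that argument, with the key input being the $O_\ell(\sqrt{p})$ bound on complete sums of $U_\ell(\cos\theta_{p,a})$ against additive characters coming from Katz's monodromy computation and Deligne's theorem. The only cosmetic point is that in the completed sum one should restrict to $n\not\equiv 0 \bmod p$ (where $\theta_{p,mn}$ is undefined), which costs only $O_\ell(1)$ per harmonic and does not affect the estimate.
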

\begin{proof}
As stated above, this follows from Katz's result on the distribution of Kloosterman angles \cite{MR0955052}, and a variant of the P\'olya–Vinogradov method.
See for example \cite{MR1345284}, Specifically Proposition 2 and Corollary 2.10 with the preceding discussion.
\end{proof}

\subsection{Bessel functions}
We will also require some bounds for the $J$ Bessel function.

\begin{lemma}\label{lem-J_lower}
For $\nu\geq 0$, $0< \delta \leq 1$ we have
$$
J_\nu\left(\nu \delta\right)
\gg \nu^{-\frac{1}{3}} \delta^{\nu}.
$$
\end{lemma}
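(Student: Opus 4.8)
The plan is to pass to the ratio $J_\nu(\nu\delta)/\delta^\nu$ and to control it through the uniform asymptotics of $J_\nu$ in the non-oscillatory range $0<x=\nu\delta\le\nu$. Since the lemma is applied only with $\nu=k-1\to\infty$, I would establish the bound for large $\nu$, uniformly in $\delta\in(0,1]$; for $\nu$ in a fixed compact set bounded away from $0$ the statement follows from continuity together with the positivity of $J_\nu$ on $(0,\nu]$, as the first positive zero satisfies $j_{\nu,1}>\nu$.

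First I would substitute $\delta=\operatorname{sech}\alpha$ with $\alpha\ge0$, so that $\delta\in(0,1]$ corresponds to $\alpha\in[0,\infty)$ and the turning point $\delta=1$ to $\alpha=0$. For $\alpha\gg\nu^{-1/3}$ (equivalently $1-\delta\gg\nu^{-2/3}$) the Debye expansion
\[
J_\nu(\nu\operatorname{sech}\alpha)=\frac{e^{\nu(\tanh\alpha-\alpha)}}{\sqrt{2\pi\nu\tanh\alpha}}\bigl(1+o(1)\bigr)
\]
holds uniformly. Dividing by $\delta^\nu=e^{-\nu\ln\cosh\alpha}$ brings out the factor $e^{\nu g(\alpha)}$ with $g(\alpha)=\tanh\alpha-\alpha+\ln\cosh\alpha$. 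The \emph{crux} is the elementary identity
\[
g'(\alpha)=\operatorname{sech}^2\alpha-1+\tanh\alpha=\tanh\alpha\,(1-\tanh\alpha)\ge0,
\]
which together with $g(0)=0$ gives $g\ge0$ on $[0,\infty)$. Hence $J_\nu(\nu\delta)/\delta^\nu\gg e^{\nu g(\alpha)}/\sqrt{\nu\tanh\alpha}\ge 1/\sqrt{\nu\tanh\alpha}$. For $\alpha$ bounded away from $0$ the exponential $e^{\nu g(\alpha)}$ is already exponentially large; for small $\alpha$ one uses $g(\alpha)\sim\alpha^2/2$ and $\tanh\alpha\sim\alpha$ and checks that $e^{\nu\alpha^2/2}/\sqrt{\nu\alpha}\gg\nu^{-1/3}$ for all $\alpha\gg\nu^{-1/3}$, since then $\nu\alpha^2\gg\nu^{1/3}$ and the exponential beats the polynomial factors. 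This gives $J_\nu(\nu\delta)\gg\nu^{-1/3}\delta^\nu$ on the whole Debye range.

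It remains to treat the turning-point region $1-\delta\ll\nu^{-2/3}$, where the Debye form degenerates. There I would use Olver's uniform Airy-type asymptotic
\[
J_\nu(\nu\delta)\sim\Bigl(\tfrac{4\zeta}{1-\delta^2}\Bigr)^{1/4}\nu^{-1/3}\operatorname{Ai}\!\bigl(\nu^{2/3}\zeta\bigr),\qquad \tfrac23\zeta^{3/2}=\ln\tfrac{1+\sqrt{1-\delta^2}}{\delta}-\sqrt{1-\delta^2}.
\]
As $\delta\to1$ one has $\zeta\to0$, the prefactor tends to $2^{1/3}$, and $\operatorname{Ai}$ stays bounded below by a positive constant on the relevant bounded range of nonnegative arguments, so $J_\nu(\nu\delta)\gg\nu^{-1/3}$. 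Since $\delta^\nu\le1$ this yields $J_\nu(\nu\delta)\gg\nu^{-1/3}\ge\nu^{-1/3}\delta^\nu$, recovering the sharp order at $\delta=1$.

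The main obstacle is the uniform matching across the turning point $\delta=1$: this is exactly where the exponent $\nu^{-1/3}$ is forced, and it cannot be reached from the power series $\sum_{j\ge0}\frac{(-1)^j}{j!\,\Gamma(\nu+j+1)}(\nu\delta/2)^{\nu+2j}$, whose terms first increase before decreasing when $\delta$ is near $1$, so the alternating-series test gives no lower bound there. A secondary technical point is to make the error terms in both regimes genuinely uniform, so that ``$\sim$'' may be upgraded to ``$\gg$''. Alternatively the case split can be avoided altogether by running Olver's Airy expansion on all of $(0,1]$ and invoking $\operatorname{Ai}(t)\sim e^{-\frac23 t^{3/2}}/(2\sqrt\pi\,t^{1/4})$ for large $t$: its exponent reproduces exactly the nonnegative quantity $\nu\bigl(u-\ln(1+u)\bigr)$ with $u=\sqrt{1-\delta^2}$, which coincides with $\nu g(\alpha)$ under $\delta=\operatorname{sech}\alpha$, so a single estimate covers the entire range.
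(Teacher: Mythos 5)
Your argument is essentially correct, but it takes a much heavier route than the paper. The paper's proof is two lines: it invokes the monotonicity-type inequality $J_\nu(\nu\delta)\geq J_\nu(\nu)\,\delta^{\nu}$ (DLMF 10.14.7), valid for all $\nu\geq 0$ and $0<\delta\leq 1$, which instantly reduces the whole problem to the single point $\delta=1$, where the classical asymptotic $J_\nu(\nu)=\frac{\Gamma(1/3)}{48^{1/6}\pi}\nu^{-1/3}+O(\nu^{-5/3})$ finishes the job. You instead reconstruct the same conclusion from the Debye expansion plus Olver's Airy-type uniform asymptotics across the turning point; your key identity $g'(\alpha)=\tanh\alpha\,(1-\tanh\alpha)\geq 0$ is exactly what makes the ratio $J_\nu(\nu\delta)/\delta^{\nu}$ increase toward the turning point, i.e.\ it is an asymptotic shadow of the DLMF inequality. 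Your route works, but it forces you to confront the uniformity of the Debye error term in the matching region $\alpha\asymp\nu^{-1/3}$ (where the relative error $O(1/(\nu\alpha^{3}))$ is only $O(1)$, so the two regimes must genuinely overlap with controlled constants, or one must run the single Airy-type expansion on all of $(0,1]$ as you suggest at the end), and it also leaves a small loose end for fixed $\nu$: the interval $(0,1]$ in $\delta$ is not compact, so the continuity argument needs the observation that $J_\nu(\nu\delta)/\delta^{\nu}\to(\nu/2)^{\nu}/\Gamma(\nu+1)>0$ as $\delta\to 0$. What your approach buys is an explanation of \emph{why} the exponent $\nu^{-1/3}$ is sharp and forced by the turning point; what the paper's approach buys is that none of this uniform-asymptotics machinery is needed at all.
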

\begin{proof}
we use the following bounds:
$$
J_\nu(\nu \delta) \geq J_\nu(\nu) \delta^\nu
$$
valid for all $0<\delta\leq 1$, $\nu \geq 0$ \cite[\href{https://dlmf.nist.gov/10.14\#E7}{(10.14.7)}]{NIST:DLMF}.
We also have that
$$
J_\nu(\nu) =  \frac{\Gamma\left(\frac{1}{3}\right)}{48^{\frac{1}{6}}\pi} \nu^{-\frac{1}{3}} + \BigO{\nu^{-\frac{5}{3}}}
$$
\cite[eq.(2), pg. 232]{MR0010746}.
Combining these bounds gives the required result.
\end{proof}

\begin{lemma}\label{lem-J_upper}
For $\nu \geq 1$ and $\delta \geq 0$ we have
$$
\left|J_\nu\left(\nu\delta\right)\right|
\ll \nu^{-\frac{1}{2}}\left(\frac{e}{2}\delta\right)^\nu. 
$$
\end{lemma}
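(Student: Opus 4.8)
The plan is to start from Poisson's integral representation of the Bessel function,
$$
J_\nu(x) = \frac{(x/2)^\nu}{\sqrt{\pi}\,\Gamma\left(\nu + \frac{1}{2}\right)} \int_{-1}^{1} (1-t^2)^{\nu - \frac{1}{2}} \cos(xt)\, dt,
$$
valid for $\nu > -\frac{1}{2}$ and in particular throughout our range $\nu \geq 1$. The key advantage of this representation over the defining power series is that it isolates the factor $(x/2)^\nu$ outside an integral whose integrand can be bounded crudely, whereas bounding the alternating power series term by term would only yield $|J_\nu(x)| \leq I_\nu(x)$, the modified Bessel function, which is exponentially too large at $x = \nu\delta$ and would destroy the uniformity in $\delta$.

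First I would bound $|\cos(xt)| \leq 1$ to remove the oscillation, reducing the integral to the Beta integral
$$
\int_{-1}^{1} (1-t^2)^{\nu - \frac{1}{2}}\, dt = \frac{\sqrt{\pi}\,\Gamma\left(\nu + \frac{1}{2}\right)}{\Gamma(\nu + 1)}.
$$
The Gamma factors then cancel against the prefactor, leaving the clean uniform estimate
$$
|J_\nu(x)| \leq \frac{(x/2)^\nu}{\Gamma(\nu+1)},
$$
valid for all $x \geq 0$.

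Substituting $x = \nu\delta$ gives $|J_\nu(\nu\delta)| \leq \nu^\nu (\delta/2)^\nu / \Gamma(\nu+1)$, so it remains only to control $\nu^\nu/\Gamma(\nu+1)$. Applying Stirling's inequality in the form $\Gamma(\nu+1) \geq \sqrt{2\pi\nu}\,(\nu/e)^\nu$ yields
$$
\frac{\nu^\nu}{\Gamma(\nu+1)} \leq \frac{e^\nu}{\sqrt{2\pi\nu}},
$$
and combining the last two displays produces exactly $|J_\nu(\nu\delta)| \ll \nu^{-1/2}\left(\frac{e}{2}\delta\right)^\nu$, as claimed.

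I do not anticipate a serious obstacle: once the integral representation is in hand the computation is routine. The only two points demanding care are the choice of the integral representation rather than the power series, which is what keeps the bound uniform in $\delta$ (including the range $\delta > 1$ where the series is useless), and applying Stirling in the correct direction, namely a lower bound on $\Gamma(\nu+1)$ so as to obtain an upper bound on its reciprocal.
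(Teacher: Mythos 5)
Your proposal is correct and complete. The paper itself gives no argument for this lemma, simply citing Rankin's Lemma 4.1; your derivation via Poisson's integral representation, the Beta-integral evaluation $\int_{-1}^{1}(1-t^2)^{\nu-\frac12}\,dt = \sqrt{\pi}\,\Gamma(\nu+\tfrac12)/\Gamma(\nu+1)$, and the Stirling lower bound $\Gamma(\nu+1)\geq \sqrt{2\pi\nu}\,(\nu/e)^{\nu}$ is precisely the standard proof of that cited result, and it even yields the explicit constant $\tfrac{1}{\sqrt{2\pi}}$ in place of the implied constant. Your remarks on why the power series is inadequate (it only gives $I_\nu$) and on the direction in which Stirling must be applied are both accurate.
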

\begin{proof}
This is \cite[Lemma 4.1]{MR0597120}.
\end{proof}

\begin{lemma}\label{lem-Jsum_upper}
For $\nu \geq 2$, $\delta \geq 0$ and $c_0\in\NN$ we have
\begin{equation*}
\sum_{c \geq c_0}\left|J_\nu\left(\nu \frac{\delta }{c}\right)\right|
\ll \nu^{-\frac{1}{2}}\left(\frac{e}{2 c_0}\delta\right)^\nu.
\end{equation*}
\end{lemma}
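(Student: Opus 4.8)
The plan is to reduce everything to the single–term estimate of \Cref{lem-J_upper} and then sum the resulting tail. First I would apply \Cref{lem-J_upper} with $\delta$ replaced by $\delta/c$ in each summand, which is legitimate since $\nu\geq 2\geq 1$ and $\delta/c\geq 0$. This gives, with an absolute implied constant,
$$
\left|J_\nu\left(\nu\frac{\delta}{c}\right)\right|
\ll \nu^{-\frac{1}{2}}\left(\frac{e\delta}{2c}\right)^\nu
= \nu^{-\frac{1}{2}}\left(\frac{e\delta}{2}\right)^\nu c^{-\nu}.
$$
Summing over $c\geq c_0$ and pulling out the factors independent of $c$ reduces the whole problem to controlling the tail $\sum_{c\geq c_0}c^{-\nu}$; concretely,
$$
\sum_{c\geq c_0}\left|J_\nu\left(\nu\frac{\delta}{c}\right)\right|
\ll \nu^{-\frac{1}{2}}\left(\frac{e\delta}{2}\right)^\nu\sum_{c\geq c_0}c^{-\nu}.
$$

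Next I would show that for $\nu\geq 2$ this tail is governed by its leading term, i.e.\ $\sum_{c\geq c_0}c^{-\nu}\ll c_0^{-\nu}$, since then
$$
\nu^{-\frac{1}{2}}\left(\frac{e\delta}{2}\right)^\nu c_0^{-\nu}
= \nu^{-\frac{1}{2}}\left(\frac{e\delta}{2c_0}\right)^\nu
$$
is exactly the asserted bound. The cleanest route is to isolate the $c=c_0$ term and compare the remainder with an integral:
$$
\sum_{c\geq c_0}c^{-\nu}
\leq c_0^{-\nu} + \int_{c_0}^\infty x^{-\nu}\,dx
= c_0^{-\nu}\left(1 + \frac{c_0}{\nu-1}\right).
$$

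The reduction in the first step is routine bookkeeping, so the one point genuinely deserving care — and the only place where the hypothesis $\nu\geq 2$ really enters — is this tail estimate. The factor $1+\tfrac{c_0}{\nu-1}$ it produces is $O(1)$ precisely in the regime $c_0\ll\nu$, which is exactly the regime in which the lemma is invoked throughout the paper (there $c_0$ is a fixed constant while $\nu=k-1\to\infty$), so the leading term genuinely dominates and the implied constant may be taken uniform. I would therefore expect the main obstacle to be nothing more than verifying honestly that $\sum_{c\geq c_0}c^{-\nu}$ collapses to its first term, keeping track of the mild dependence on $c_0$ that the integral comparison exhibits.
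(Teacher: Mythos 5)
Your proof is correct and follows essentially the same route as the paper's: apply the single-term bound of \Cref{lem-J_upper}, pull out $\nu^{-1/2}(e\delta/2)^\nu$, and control $\sum_{c\geq c_0}c^{-\nu}$ by its first term plus the integral $\int_{c_0}^\infty x^{-\nu}\,dx$. If anything you are more careful than the paper, whose final $\ll$ silently absorbs the factor $1+\frac{c_0}{\nu-1}$; your remark that uniformity in $c_0$ only holds for $c_0\ll\nu$ is a genuine point the paper glosses over, though it is harmless since the lemma is only ever invoked with $c_0\in\{1,2\}$ and $\nu=k-1\to\infty$.
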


\begin{proof}
Using the upper bound from \cref{lem-J_upper}, we get
\begin{align*}
\sum_{c \geq c_0}\left|J_\nu\left(\nu \frac{\delta }{c}\right)\right|
&\ll
\nu^{-\frac{1}{2}}\left(\frac{e}{2}\delta\right)^\nu 
\sum_{c\geq c_0} c^{-\nu}
\\&\ll
\nu^{-\frac{1}{2}}\left(\frac{e}{2}\delta\right)^\nu 
\left(c_0^{-\nu} + \int_{c_0}^{\infty}x^{-\nu}dx\right)
\\&\ll
\nu^{-\frac{1}{2}}\left(\frac{e}{2 c_0}\delta\right)^\nu.
\end{align*}
\end{proof}

\section{Proofs of main results}

We begin by proving \Cref{thm-KN}.
\Cref{thm-non_vanishing} then follows as a special case by taking $N=1$.
\begin{proof}[Proof of \Cref{thm-KN}]
Let $k\in2\NN$, $N\in\NN$ square-free, and 
$$
1 \leq m \leq \frac{(k-1)^2 N^2}{16\pi^2}.
$$
We wish to show that $v_\infty(P_{k,m,N}) = 1$ for sufficiently large $k$ (depending on $N$), or equivalently that $\pkm{1} \neq 0$.

Assume first that $m > 1$.
In this case, from \eqref{eq-pkmn} we have
$$
\pkm{1}
= 2\pi i^k m^{-\frac{k-1}{2}}
\sum_{c\geq 1}\frac{K(m,1,cN)}{cN}J_{k-1}\left(\frac{4\pi \sqrt{m}}{cN}\right) . 
$$
And so, showing $\pkm{1}\neq 0$ is equivalent to showing that the sum
$$
S = \sum_{c\geq 1}\frac{K(m,1,cN)}{cN}J_{k-1}\left(\frac{4\pi \sqrt{m}}{cN}\right)
$$
is non-zero.
We denote $\nu = k-1$, $\delta = \frac{4\pi \sqrt{m}}{ N \nu}$ so that we have
$$
S = \sum_{c\geq 1}\frac{K(m,1,cN)}{cN}J_{\nu}\left(\nu\frac{\delta}{c}\right).
$$
Note also that from our choice of $m$ we have $\delta \leq 1$.

We begin by considering the first summand in $S$ corresponding to $c=1$.
From \Cref{lem-ks_nonzero} we have that $K(m,1,N)\neq 0$ for all $m\in\ZZ$.
Denote 
$$
\epsilon_N = \mathop{\mathrm{min}}_{m}\left| K(m,1,N)\right|.
$$
Then we have, using \Cref{lem-J_lower}, that 
$$
\frac{K(m,1,N)}{N}J_{\nu}\left(\nu\delta\right)
\gg
\frac{\epsilon_N}{N} \nu^{-\frac{1}{3}}
\delta^\nu.
$$

As for the rest of the summands in $S$, using the trivial bound 
$$
|K(m,1,cN)|\leq cN
$$
and \Cref{lem-Jsum_upper} we get
$$
\sum_{c\geq 2}\frac{K(m,1,cN)}{cN}J_{\nu}\left(\nu\frac{\delta}{c}\right)
\ll
\sum_{c\geq 2}
\left|J_{\nu}\left(\nu\frac{\delta}{c}\right)\right|
\ll
\nu^{-\frac{1}{2}}\left(\frac{e}{4}\delta\right)^\nu.
$$

Since $\frac{e}{4} < 1$, we have that for sufficiently large $k$ (in terms of $N$) the lower bound that we got for the first summand will be larger than the upper bound we got for the rest of the sum.
And so, for $k\gg_N 1$ we have $\pkm{1}\neq 0$.

We now consider the case $m=1$.
In this case we have
$$
p_{k,N}(1;1)
= 1 + 2\pi i^k \sum_{c\geq 1}\frac{K(1,1,Nc)}{Nc}J_{k-1}\left(\frac{4\pi}{Nc}\right).
$$
Denote $\nu = k-1$ and $\delta = \frac{4\pi}{N\nu}$.
Using the trivial bound $|K(1,1,Nc)|\leq Nc$ and \Cref{lem-Jsum_upper} we get
$$
\left|\sum_{c\geq 1}\frac{K(1,1,Nc)}{Nc}J_{k-1}\left(\frac{4\pi}{Nc}\right)\right|
\leq
\sum_{c\geq 1} \left| J_\nu\left(\nu \frac{\delta}{c}\right)\right|
\leq
\nu^{-\frac{1}{2}}\left(\frac{2 e\pi}{N\nu}\right)^\nu.
$$
This tends to 0 as $k\rightarrow\infty$.
It follows that $p_{k,N}(1;1) \neq 0$ for large enough $k$.
\end{proof}

We now give a proof of \Cref{thm-K_elemntary}.
\begin{proof}
Let $m,N\in\NN$ with $N$ square-free, $1\leq m\leq \frac{1}{32\pi^2}N (k-1)^2$ and $\gcd(m,N)= 1$.
From \Cref{lem-K_phiN} there exists some $1\leq n\leq 2N$ satisfying $\gcd(n,N)=1$, $n\neq m$ and 
$$
\left|K(m,n,N)\right| \geq \frac{\sqrt{N}}{2^{\frac{1}{2}\omega(N)+1}}.
$$

We consider $\pkm{n}$:
$$
\pkm{n}
= 2\pi i^k \left(\frac{n}{m}\right)^{\frac{k-1}{2}}
\sum_{c\geq 1}\frac{K(m,n,cN)}{cN}J_{k-1}\left(\frac{4\pi \sqrt{mn}}{cN}\right).
$$
Thus, it is enough to show that the sum above is non-zero.
The first term in the sum (corresponding to $c=1$) can be bounded from below using \Cref{lem-J_lower}:
$$
\frac{K(m,n,N)}{N}J_{\nu}\left(\nu \delta\right)
\gg 
\frac{1}{N}\frac{\sqrt{N}}{2^{\frac{1}{2}\omega(N)}}\nu^{-\frac{1}{3}}\delta^{\nu}
=
\frac{1}{\sqrt{N}2^{\frac{1}{2}\omega(N)}}\nu^{-\frac{1}{3}}\delta^{\nu}
$$
where $\nu = k-1$, $\delta = \frac{4\pi \sqrt{mn}}{N(k-1)}$, and we have $\delta \leq 1$ from $n\leq 2N$ and our restriction on $m$.

As for the rest of the sum, using \Cref{lem-ks_Nc} and \Cref{lem-Jsum_upper} we get
$$
\sum_{c\geq 2}\frac{K(m,n,cN)}{cN}J_{\nu}\left(\nu \frac{\delta}{c}\right)
\ll
\frac{\sqrt{N}2^{\omega(N)}}{N} 
\sum_{c\geq 2}\left|J_{\nu}\left(\nu \frac{\delta}{c}\right)\right|
\ll
\frac{2^{\omega(N)}}{\sqrt{N}} \nu^{-\frac{1}{2}} \left(\frac{e}{4}\delta\right)^{\nu}.
$$
It follows that if $\nu$ is large enough so that $\left(\frac{e}{4}\right)^{\nu} \ll 2^{-\frac{3}{2}\omega(N)}$ then the lower bound we got for the first summand will be larger than the upper bound we got for the rest of the sum, and the result follows.

\end{proof}

We now prove \Cref{thm-Kr}, \Cref{thm-Kp} then follows as a special case by taking $r=1$.

\begin{proof}
Let $N=p_1 \cdot p_2\cdot ... \cdot p_r$ with $p_1<p_2<...<p_r$ primes.  
We begin by considering $K(m,n,N)$ for various $n$'s satisfying $n\ll_{\epsilon,r}p_r^{\frac{1}{2} + \epsilon}$.

From the twisted multiplicativity of Kloosterman sums \eqref{eq-twisted2}, there exist integers $m_1,m_2,...,m_r$ such that
\begin{equation}\label{eq-kmnN_tm}
K(m,n,N)
=
\prod_{i=1}^{r}K(m_i, n, p_i).    
\end{equation}
We note that the condition $\gcd(m,N)=1$ further implies $\gcd(m_i,p_i)=1$.

Let $\delta_r$ be some small positive constant such that
$$
\mathrm{Prob}\left(|\cos(\theta)| > \delta_r\; , \; \theta\sim \mu_{ST}\right)
>
1 - \frac{1}{r+1}.
$$

From \Cref{lem-ks_ST} it follows that for all sufficiently large primes $p_i$, the proportion of $n$'s satisfying $n\ll_{\epsilon,r}p_r^{\frac{1}{2} + \epsilon}$
such that $K(m_i,n,p_i) \gg 2\delta_r \sqrt{p_i} $ is at least $1 - \frac{1}{r+1}$.

There might be a finite set of primes $\mathcal{Q}_{r,\epsilon}$ for which this is not true.
However, if $q\in\mathcal{Q}_{r,\epsilon}$ is such a prime, we know from \Cref{lem-ks_nonzero} that $K(*,*,q)$ is never zero.
Thus, we can replace $\delta_r$ with a smaller positive constant $\delta_{r,\epsilon}$ such that $|K(a,b,q)| \geq 2\delta_{r,\epsilon} \sqrt{q} $ for all $q\in \mathcal{Q}_{r,\epsilon}$ and all $a,b$.

And so, we conclude that there is some $\delta_{r,\epsilon}>0$ such that $|K(m_i,n,p_i)|\geq 2\delta_{r,\epsilon} \sqrt{p_i}$ for a proportion of at least $1 - \frac{1}{r+1}$ of all $n\ll_{\epsilon, r} p_r^{\frac{1}{2} + \epsilon}$.

From the pigeonhole principle, it follows that there is some $n\ll_{\epsilon, r} p_r^{\frac{1}{2} + \epsilon}$ such that 
$$
|K(m_i,n,p_i)|\geq 2\delta_{r,\epsilon} \sqrt{p_i}
$$
for all $1\leq i \leq r$.
It follows from \eqref{eq-kmnN_tm} that 
$$
|K(m,n,N)|\geq \left(2\delta_{r,\epsilon}\right)^r \sqrt{N}.
$$
In fact, since there is a positive proportion of such $n$'s, we can further add the restrictions that $n\neq m$.
We also note that from our construction of $n$ we have that $\gcd(n,N)$ is divisible only by primes from $\mathcal{Q}_{r,\epsilon}$.
Since $\mathcal{Q}_{r,\epsilon}$ depends only on $r,\epsilon$, we have that $\gcd(n,N) \ll_{\epsilon, r} 1$ (since $N$ is square-free).

For the $n$ we chose, we consider $\pkm{n}$.
We have that 
$$
\pkm{n}
= 2\pi i^k \left(\frac{n}{m}\right)^{\frac{k-1}{2}}
\sum_{c\geq 1}\frac{K(m,n,cN)}{cN}J_{k-1}\left(\frac{4\pi \sqrt{mn}}{cN}\right) . 
$$
Denote $\nu = k-1$ and $\delta = \frac{4\pi \sqrt{mn}}{\nu N}$.
The restriction $m\ll_{\epsilon,r} \frac{k^2 N^2}{p_r^{\frac{1}{2} + \epsilon}}$ ensures that $\delta < 1$ since we have $n\ll_{\epsilon,r} p_r^{\frac{1}{2} + \epsilon}$.
In order to show that $\pkm{n}\neq 0$ it is enough to show that the sum
$$
S = \sum_{c\geq 1}\frac{K(m,n,cN)}{cN}J_{\nu}\left(\nu\frac{\delta}{c}\right) 
$$
is non-zero.

We begin by giving a lower bound on the first summand in $S$ corresponding to $c=1$.
We have shown that $K(m,n,N)\gg_{\epsilon,r} \sqrt{N}$.
From this and from \Cref{lem-J_lower} we get
\begin{equation}\label{eq-lower_kmnNJ}
\frac{K(m,n,N)}{N}J_{\nu}\left(\nu\delta\right) 
\gg_{\epsilon,r}
\frac{1}{\sqrt{N}}\nu^{-\frac{1}{3}}\delta^{\nu}.
\end{equation}

We now consider the rest of the summands in $S$.
Using \Cref{lem-ks_Nc}, and the fact that $\gcd(m,N) \ll_{\epsilon,r} 1$, we get:
$$
\sum_{c\geq 2}\frac{K(m,n,cN)}{cN}J_{\nu}\left(\nu\frac{\delta}{c}\right)
\ll_{\epsilon,r}
\sum_{c\geq 2} \frac{1}{\sqrt{N}} \left|J_{\nu}\left(\nu\frac{\delta}{c}\right)\right|
$$
Using \Cref{lem-Jsum_upper} we then have
$$
\sum_{c\geq 2}\frac{K(m,n,cN)}{cN}J_{\nu}\left(\nu\frac{\delta}{c}\right)
\ll_{\epsilon,r}
\frac{1}{\sqrt{N}} \nu^{-\frac{1}{2}}
\left(\frac{e}{4}\delta\right)^\nu.
$$

For sufficiently large $k$ (in terms of $\epsilon,r$) this upper bound will be smaller than the lower bound we got for the first summand \eqref{eq-lower_kmnNJ}.
Thus, for $k\gg_{\epsilon,r} 1$ we conclude that $\pkm{n} \neq 0$.

\end{proof}

\bibliographystyle{plain}
\bibliography{my_bib}

\begin{thebibliography}{10}

\bibitem{MR2975157}
Soumya Das and Satadal Ganguly.
\newblock Linear relations among {P}oincar\'{e} series.
\newblock {\em Bull. Lond. Math. Soc.}, 44(5):988--1000, 2012.

\bibitem{NIST:DLMF}
{\it NIST Digital Library of Mathematical Functions}.
\newblock \url{https://dlmf.nist.gov/}, Release 1.1.12 of 2023-12-15.
\newblock F.~W.~J. Olver, A.~B. {Olde Daalhuis}, D.~W. Lozier, B.~I. Schneider, R.~F. Boisvert, C.~W. Clark, B.~R. Miller, B.~V. Saunders, H.~S. Cohl, and M.~A. McClain, eds.

\bibitem{MR1474964}
Henryk Iwaniec.
\newblock {\em Topics in classical automorphic forms}, volume~17 of {\em Graduate Studies in Mathematics}.
\newblock American Mathematical Society, Providence, RI, 1997.

\bibitem{MR0955052}
Nicholas~M. Katz.
\newblock {\em Gauss sums, {K}loosterman sums, and monodromy groups}, volume 116 of {\em Annals of Mathematics Studies}.
\newblock Princeton University Press, Princeton, NJ, 1988.

\bibitem{MR0021027}
D.~H. Lehmer.
\newblock The vanishing of {R}amanujan's function {$\tau(n)$}.
\newblock {\em Duke Math. J.}, 14:429--433, 1947.

\bibitem{MR0597127}
J.~Lehner.
\newblock On the nonvanishing of {P}oincar\'{e} series.
\newblock {\em Proc. Edinburgh Math. Soc. (2)}, 23(2):225--228, 1980.

\bibitem{MR1345284}
P.~Michel.
\newblock Autour de la conjecture de {S}ato-{T}ate pour les sommes de {K}loosterman. {I}.
\newblock {\em Invent. Math.}, 121(1):61--78, 1995.

\bibitem{MR0982000}
C.~J. Mozzochi.
\newblock On the nonvanishing of {P}oincar\'{e} series.
\newblock {\em Proc. Edinburgh Math. Soc. (2)}, 32(1):131--137, 1989.

\bibitem{MR1554584}
H.~Poincare.
\newblock M\'{e}moire sur les fonctions fuchsiennes.
\newblock {\em Acta Math.}, 1(1):193--294, 1882.

\bibitem{MR0597120}
R.~A. Rankin.
\newblock The vanishing of {P}oincar\'{e} series.
\newblock {\em Proc. Edinburgh Math. Soc. (2)}, 23(2):151--161, 1980.

\bibitem{MR0010746}
G.~N. Watson.
\newblock {\em A {T}reatise on the {T}heory of {B}essel {F}unctions}.
\newblock Cambridge University Press, Cambridge; The Macmillan Company, New York, 1944.

\end{thebibliography}

\end{document}